\definecolor{myurlcolor}{rgb}{0,0,0.5}
\newcommand{\dashbk}{-}
\newcommand{\such}{\:|\:}
\newcommand{\epsln}{\varepsilon}
\newcommand{\lwr}[1]{\mathbf{#1}}	
\newcommand{\reals}{\mathbb{R}}
\newcommand{\rationals}{\mathbb{Q}}
\newcommand{\demph}[1]{\textbf{\textup{#1}}}
\newcommand{\done}{\hfill\ensuremath{\Box}}
\newenvironment{prooflike}[1]{\begin{trivlist}\item\textbf{#1}\ }
{\end{trivlist}}
\newenvironment{proof}{\begin{prooflike}{Proof}}{\end{prooflike}}
\newcommand{\iso}{\cong}
\newcommand{\toby}[1]{\stackrel{#1}{\to}}
\newcommand{\nm}[1]{\Vert #1 \Vert}
\newcommand{\nrm}[2]{\Vert #1 \Vert_{#2}}
\newcommand{\incl}{\hookrightarrow}
\newcommand{\cln}{\colon}
\newcommand{\Rplus}{\reals_+}
\newcommand{\subprop}[1]{\item\emph{#1:}}
\newcommand{\fprop}[1]{\demph{#1:}}
\newcommand{\hlf}{{\textstyle\frac{1}{2}}}
\newcommand{\MM}[2]{M\bigl( #1,\, #2 \bigr)}
\newcommand{\cooR}{c_{00}^+}
\newcommand{\cooD}{c_{00}^\Delta}
\newcommand{\pos}[1]{\widetilde{#1}}
\newcommand{\dee}{\,d}
\newtheorem{thm}{Theorem}[section]
\newtheorem{lemma}[thm]{Lemma}
\newtheorem{predefn}[thm]{Definition}
\newenvironment{defn}{\begin{predefn}\upshape}{\end{predefn}}
\newtheorem{preexample}[thm]{Example}
\newenvironment{example}{\begin{preexample}\upshape}{\end{preexample}}
\newtheorem{preexamples}[thm]{Examples}
\author{Tom Leinster%
\thanks{School of Mathematics and Statistics, University of Glasgow,
University Gardens, Glasgow
G12 8QW, UK; Tom.Leinster@glasgow.ac.uk.  Supported by an EPSRC Advanced
Research Fellowship.  Mathematics Subject Classification (2010): 26E60
(primary), 47A30, 52A21.}}  
\title{A multiplicative characterization of the power
means} \date{}
\begin{document}

\sloppy
\maketitle

\begin{abstract}
A startlingly simple characterization of the $p$-norms has recently been found
by Aubrun and Nechita~\cite{AuNe} and by Fern\'andez-Gonz\'alez, Palazuelos
and P\'erez-Garc\'{\i}a~\cite{FGPPG}.  We deduce a simple characterization of
the power means of order $\geq 1$.
\end{abstract}

\section*{Introduction}

For each real $p \neq 0$, the power mean (or generalized mean)
of order $p$ assigns the quantity
\[
\Bigl( \frac{1}{n} \sum_{i \in I} x_i^p \Bigr)^{1/p}
\]
to a family $(x_i)_{i \in I}$ of $n$ positive real numbers.  More generally,
uneven weights $(w_i)_{i \in I}$ may be attached to the arguments, giving the
power mean
\[
M_p(w, x) 
= 
\Bigl( \sum_{i \in I} w_i x_i^p \Bigr)^{1/p}.
\]
The power mean of order $0$ is defined as the limit of this expression as $p
\to 0$, namely $M_0(w, x) = \prod_i x_i^{w_i}$.  For the same reason, one
defines $M_{-\infty}(w, x) = \min_i x_i$ and $M_\infty(w, x) = \max_i x_i$.

The basic theory of power means is laid out in the classic text of Hardy,
Littlewood and P\'olya~\cite{HLP}.  In particular, their Theorem~215, when
taken in conjunction with Theorem~84, provides an axiomatic characterization
of the means $M_p$ of order $p \in (0, \infty)$.

Here we give a different characterization, capturing the means $M_p$ of order
$p \in [1, \infty]$.  It is based on the recent characterization by Aubrun and
Nechita~\cite{AuNe} of the $p$-norms
\[
\nrm{x}{p} 
= 
\begin{cases}
\bigl( \sum_{i \in I} |x_i|^p \bigr)^{1/p}      &
\text{if } p < \infty,  \\
\max_{i \in I} |x_i|    &
\text{if } p = \infty.
\end{cases}  
\]
This formula puts a norm on $\reals^I$ for each finite set $I$, and is
\emph{multiplicative}: if $x \in \reals^I$ and $y \in \reals^J$ then $\nrm{x
\otimes y}{p} = \nrm{x}{p} \nrm{y}{p}$, where $x \otimes y \in \reals^I
\otimes \reals^J \iso \reals^{I \times J}$.  Roughly speaking, their
result---which we review below---is that multiplicativity characterizes the
$p$-norms uniquely.  We deduce from it a multiplicative characterization of
the power means.

The theorem proved by Aubrun and Nechita is very closely related to earlier
results of Fern\'andez-Gonz\'alez, Palazuelos and
P\'erez-Garc\'{\i}a~\cite{FGPPG}, although the proofs are not at all similar.
Here it will be more convenient to use Aubrun and Nechita's formulation.

\paragraph*{Acknowledgements} I thank Mark Meckes and Carlos Palazuelos for
useful discussions.

\section{Statement of the theorem}

Write $\Rplus = \{ x \in \reals \such x \geq 0 \}$.   For a finite set $I$,
write 
\[
\Delta_I
=
\bigl\{ 
w \in \Rplus^I \such \sum_{i \in I} w_i = 1
\bigr\}.
\]
For each map $f\cln I \to J$ of finite sets, there is an induced map
$\Delta_I \to \Delta_J$, denoted by $w \mapsto fw$ and defined by $(fw)_j =
\sum_{i \in f^{-1}(j)} w_i$.  There is also an induced map $\Rplus^J \to
\Rplus^I$, denoted by $x \mapsto xf$ and defined by $(xf)_i = x_{f(i)}$.  For
finite sets $I$ and $J$, there are canonical maps
\[
\Delta_I \times \Delta_J \toby{\otimes} \Delta_{I \times J},
\qquad
\Rplus^I \times \Rplus^J \toby{\otimes} \Rplus^{I \times J}
\]
defined by $x \otimes y = (x_i y_j)_{(i, j) \in I \times J}$ whenever $(x, y)
\in \Delta_I \times \Delta_J$ or $(x, y) \in \Rplus^I \times \Rplus^J$.

\begin{defn}
\begin{enumerate}
\item

A \demph{system of means} consists of a function $M: \Delta_I \times \Rplus^I
\to \Rplus$ for each finite set $I$, satisfying:
\begin{description}
\subprop{Functoriality} $M(fw, x) = M(w, xf)$ whenever $f\cln I \to J$ is a
map of finite sets, $w \in \Delta_I$ and $x \in \Rplus^J$.

\subprop{Consistency} $M((1), (c)) = c$ whenever $c \in \Rplus$ and $I$ is a
one-element set, where $(1)$ denotes the unique element of $\Delta_I$ and
$(c)$ is the element of $\Rplus^I$ corresponding to $c$.%$c \in \Rplus$.

\subprop{Monotonicity} $M(w, x) \leq M(w, y)$ whenever $I$ is a finite set, $w
\in \Delta_I$ and $x, y \in \Rplus^I$ with $x_i \leq y_i$ for all $i \in I$.
\end{description}

\item

A system of means $M$ is \demph{convex} if $M(w, \frac{x + y}{2}) \leq \max\{
M(w, x), M(w, y)\}$ whenever $I$ is a finite set, $w \in \Delta_I$ and $x, y
\in \Rplus^I$.

\item 

A system of means $M$ is \demph{multiplicative} if $M(w \otimes v, x \otimes
y) = M(w, x) M(v, y)$ whenever $I$ and $J$ are finite sets, $(w, x) \in
\Delta_I \times \Rplus^I$ and $(v, y) \in \Delta_J \times \Rplus^J$.
\end{enumerate}
\end{defn}

If $M(w, \xi)$ is written as $\int \xi \dee w$, then functoriality becomes the
classical formula for integration under a change of variables or integration
against a push-forward measure: 
\[
\int \xi \dee(f_* w) = \int (\xi\circ f) \dee w.  
\]
(This notation is potentially misleading, since $M(w, \xi)$ need not be linear
in $\xi$.)  The significance of functoriality will be explained further in the
next section.

\begin{example}
For each $p \in [0, \infty]$ there is a multiplicative system of means $M_p$
defined by
\[
M_p(w, x)
=
\begin{cases}
\prod_{i \in I} x_i^{w_i}       &
\text{if } p = 0        \\
\bigl(\sum_{i \in I} w_i x_i^p \bigr)^{1/p}     &
\text{if } 0 < p < \infty       \\
\max_{i \cln w_i > 0} x_i     &
\text{if } p = \infty
\end{cases}
\]
($w \in \Delta_I$, $x \in \Rplus^I$).  If $p \geq 1$ then $M_p$ is convex, by
the triangle inequality for the $p$-norm.  If $p < 1$ then $M_p$ is not
convex, as may be seen by taking 
$w = (1/2, 1/2)$, $x = (1, 0)$ and $y = (0,
1)$.
\end{example}

The purpose of this note is to prove:

\begin{thm}     \label{thm:main}
Every convex multiplicative system of means is equal to $M_p$ for some $p \in
[1, \infty]$.
\end{thm}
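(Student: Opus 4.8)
The plan is to distil from a convex multiplicative system of means $M$ a multiplicative norm on finitely supported sequences, and then to quote the characterization of the $p$-norms due to Aubrun and Nechita (to be recalled in the next section). Write $u_I\in\Delta_I$ for the uniform weight on a finite set $I$, so that $u_I\otimes u_J=u_{I\times J}$, and put $c_n=M(u_{[n]},e_1)$, the value of $M$ at a standard basis vector of $\reals^n$. Functoriality along permutations of $I$ (which fix $u_I$) shows $c_n$ is well defined and that each $M(u_I,-)$ is permutation- and sign-invariant; and $c_n>0$ because convexity forbids $c_n=0$ (if $M(u_I,e_1)=0$ then, by homogeneity and permutation-invariance, convexity applied to the basis vectors would force the mean $M(u_I,\mathbf 1)=1$ of a constant vector to be $0$). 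Multiplicativity gives $c_1=1$ and $c_{mn}=c_mc_n$. For $x\in\reals^I$ set $\nm x=M(u_I,\lvert x\rvert)/c_{\lvert I\rvert}$.

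First I would check that $\nm{-}$ is a norm on each $\reals^I$. Homogeneity $\nm{\lambda x}=\lvert\lambda\rvert\,\nm x$ comes from consistency together with multiplicativity and the bijection $I\times\{*\}\iso I$ (tensoring with the one-point datum $\bigl((1),(c)\bigr)$ multiplies the mean by $c$, by consistency, while by functoriality it merely rescales the argument). Sign- and permutation-invariance are noted above. For subadditivity one uses convexity: the sublevel sets of $M(u_I,-)$ on $\Rplus^I$ are midpoint-convex, and monotonicity and homogeneity make $M(u_I,-)$ regular enough on the positive orthant that these sets are genuinely convex, so $M(u_I,-)$ is sublinear on $\Rplus^I$; hence $\nm{-}$ is a seminorm, and a norm since $\nm{e_i}=1$ while $\nm x\geq\lvert x_i\rvert$ by monotonicity. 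Next one must check the norms on the various $\reals^I$ cohere — that $\nm{(x,0)}=\nm x$ when a zero coordinate is adjoined — so that $\nm{-}$ descends to a single norm on $c_{00}$; here boundary continuity of $M(u_{I\sqcup\{*\}},-)$ (again from sublinearity and monotonicity) reduces matters to the interior, where functoriality applies. Granting this, $\nm{-}$ is a multiplicative, permutation- and sign-invariant norm on $c_{00}$, multiplicativity being exactly that of $M$ combined with $u_I\otimes u_J=u_{I\times J}$ and $c_{mn}=c_mc_n$.

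The Aubrun–Nechita theorem then gives $\nm{-}=\nrm{-}{p}$ for a unique $p\in[1,\infty]$, so $M(u_I,x)=c_{\lvert I\rvert}\,\nrm xp$ for $x\in\Rplus^I$. To identify $c_n$, use functoriality once more: a surjection $f\cln[n^2]\to[n]$ all of whose fibres have size $n$ satisfies $fu_{[n^2]}=u_{[n]}$, so $M(u_{[n]},x)=M(u_{[n^2]},xf)$; since $xf$ repeats each entry of $x$ exactly $n$ times, $\nrm{xf}p=n^{1/p}\nrm xp$, so $c_n\nrm xp=M(u_{[n]},x)=M(u_{[n^2]},xf)=c_{n^2}\,n^{1/p}\nrm xp$, and with $c_{n^2}=c_n^{\,2}$ this forces $c_n=n^{-1/p}$ and $M(u_I,x)=\lvert I\rvert^{-1/p}\nrm xp=M_p(u_I,x)$. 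For a weight $w\in\Delta_I$ with rational entries, write $w=fu_{[N]}$ for a suitable $f\cln[N]\to I$ ($N$ a common denominator); functoriality gives $M(w,x)=M(u_{[N]},xf)=M_p(u_{[N]},xf)=M_p(w,x)$, the last step because $M_p$ is itself a system of means. Thus $M=M_p$ on all rational weights.

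It remains to pass from rational weights to all weights, and this is where I expect the main obstacle. One needs $w\mapsto M(w,x)$ to be continuous on the interior of $\Delta_I$; the boundary causes no trouble, since if $w$ is supported on $J\subsetneq I$ then functoriality along $J\incl I$ gives $M(w,x)=M(w\vert_J,x\vert_J)$, allowing an induction on $\lvert I\rvert$. Interior continuity is exactly the point at which convexity is indispensable rather than decorative: for non-convex systems such as the weighted median, $w\mapsto M(w,x)$ genuinely jumps at interior points, and there the conclusion fails. I would try to obtain it by showing that convexity forces $w\mapsto M(w,x)$ to be \emph{concave}, hence continuous on the interior — this concavity is not a formal consequence of the sublinearity already extracted in the $x$-variable, and proving it cleanly is, to my mind, the crux of the whole argument. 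Once continuity is in hand, $M$ and $M_p$ agree on a dense set of interior weights and on the boundary, hence everywhere.
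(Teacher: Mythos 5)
Your overall architecture matches the paper's: extract a multiplicative system of norms from $M$ at uniform weights, invoke the Aubrun--Nechita theorem, transfer the conclusion to rational weights by functoriality, and then approximate. Your identification of the constant $c_n=n^{-1/p}$ via an equal-fibre surjection $[n^2]\to[n]$ is a genuinely nice alternative to the paper's route, which instead pins down $p$ in advance by solving the Cauchy functional equation for $\theta(s)=M((s,1-s),(1,0))$. But two steps are not actually carried out. First, the coherence $\nm{(x,0)}=\nm{x}$: your appeal to ``boundary continuity \dots\ reduces matters to the interior, where functoriality applies'' does not work, because functoriality along the inclusion $\{1,\dots,n\}\incl\{1,\dots,n+1\}$ pushes $u_n$ forward to $(1/n,\dots,1/n,0)$, not to $u_{n+1}$, so it says nothing about $M(u_{n+1},(x,0))$. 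The fix is available inside your own setup: by multiplicativity and symmetry, $c_n\cdot M(u_{n+1},(x,0))$ and $c_{n+1}\cdot M(u_n,x)$ are both equal to $M(u_{n(n+1)},(x_1,\dots,x_n,0,\dots,0))$, which is exactly the identity you need; this is how the paper does it.

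Second, and more seriously, your passage from rational to arbitrary weights rests on the unproved assertion that convexity forces $w\mapsto M(w,x)$ to be concave (hence continuous) on the interior of $\Delta_n$; you yourself flag this as the crux and leave it open. The paper never proves continuity (or concavity) of $M$ in $w$. Instead it establishes a \emph{transfer} property: if $x_n\leq x_{n-1}$ and $0\leq\epsln\leq w_n$, then moving weight $\epsln$ from coordinate $n$ to coordinate $n-1$ can only increase $M(w,x)$ (proved by repetition, monotonicity, repetition). Transfer yields an approximation lemma: for any $w$ and $\delta>0$ there are rational $w',w''$ within $\delta$ of $w$ in the sup norm with $M(w',x)\geq M(w,x)\geq M(w'',x)$ (order the coordinates so that $x_1\leq\cdots\leq x_n$ and shave each weight to a nearby rational, passing the excess to the next coordinate). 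Since $M=M_p$ at rational weights and $M_p(\cdot,x)$ is manifestly continuous where all weights are positive, squeezing gives $M(w,x)=M_p(w,x)$; weights with a zero coordinate are handled by the zero-weight property and induction on $n$, as you suggest. This transfer/sandwich device is the missing ingredient that replaces the concavity claim you could not establish.
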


\section{Proof of the theorem}
\label{sec:proof}

To prove their characterization theorem for $p$-norms, Aubrun and Nechita use
a standard result of probability theory, Cram\'er's large deviation theorem,
and Fern\'andez-Gonz\'alez, Palazuelos and P\'erez-Garc\'{\i}a use techniques
from the theory of Banach spaces.  In contrast, the deduction of our theorem
from theirs is elementary and almost entirely self-contained.

We begin by recording some elementary properties of systems of means.  We then
take a convex multiplicative system of means, $M$, and extract a number $p \in
[1, \infty]$.  The proof that $M = M_p$ proceeds in two steps.  First we make
the connection between means and norms and apply the $p$-norm characterization
theorem, concluding that $M$ and $M_p$ agree when the weighting is uniform
($w_i = w_j$ for all $i, j$).  Then we apply standard arguments to extend this
result to uneven weightings.

\subsection{Elementary properties of systems of means}

When $I = \{1, \ldots, n\}$ for some integer $n \geq 1$, we write $\Delta_n$
for $\Delta_I$ and $\Rplus^n$ for $\Rplus^I$.  

\begin{lemma}   \label{lemma:fun}
Every system of means $M$ has the following properties.
\begin{enumerate}
\item   \label{prop:sym}
\fprop{Symmetry} 
for all $n \geq 1$, $(w, x) \in \Delta_n \times \Rplus^n$, and permutations
$\sigma \in S_n$,
\[
M(w, x)
=
\MM{(w_{\sigma(1)}, \ldots, w_{\sigma(n)})}% 
{(x_{\sigma(1)}, \ldots, x_{\sigma(n)})}.
\]

\item   \label{prop:rep}
\fprop{Repetition} 
for all $n \geq 1$, $w \in \Delta_{n + 1}$, and $x \in \Rplus^n$,
\begin{align*}
&
\MM{(w_1, \ldots, w_{n - 1}, w_n, w_{n + 1})}%
{(x_1, \ldots, x_{n - 1}, x_n, x_n)}\\
=       &
\MM{(w_1, \ldots, w_{n - 1}, w_n + w_{n + 1})}%
{(x_1, \ldots, x_{n - 1}, x_n)}.
\end{align*}

\item   \label{prop:zero}
\fprop{Zero weight}
for all $n \geq 1$, $w \in \Delta_n$, and $x \in \Rplus^{n + 1}$,
\[
\MM{(w_1, \ldots, w_n, 0)}{(x_1, \ldots, x_n, x_{n + 1})}
=
\MM{(w_1, \ldots, w_n)}{(x_1, \ldots, x_n)}.
\]
\end{enumerate}
\end{lemma}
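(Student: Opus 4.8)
The plan is to derive all three properties from the functoriality axiom alone; consistency and monotonicity will not be needed. In each case the idea is the same: choose a suitable map $f$ of finite sets, compute the push-forward $fw$ and the pullback $xf$ from their definitions, and read off the desired identity as the single instance $M(fw,x) = M(w,xf)$ of functoriality.

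For \emph{Repetition}, I would take $f\cln\{1,\ldots,n+1\}\to\{1,\ldots,n\}$ to be the identity on $\{1,\ldots,n\}$ and to send $n+1$ to $n$. Then the only non-singleton fibre is $f^{-1}(n) = \{n,n+1\}$, so for $w\in\Delta_{n+1}$ the push-forward $fw$ has entries $(w_1,\ldots,w_{n-1},w_n+w_{n+1})$, while for $x\in\Rplus^n$ the pullback $xf$ is $(x_1,\ldots,x_n,x_n)$. Functoriality applied to this $f$, $w$, $x$ is exactly the stated equation. For \emph{Zero weight}, I would instead take the inclusion $f\cln\{1,\ldots,n\}\incl\{1,\ldots,n+1\}$; the fibre over $n+1$ is empty, so for $w\in\Delta_n$ the push-forward $fw$ is $(w_1,\ldots,w_n,0)$, and for $x\in\Rplus^{n+1}$ the pullback $xf$ is $(x_1,\ldots,x_n)$, and functoriality gives the claim.

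For \emph{Symmetry}, given $\sigma\in S_n$ I would apply functoriality to the bijection $f=\sigma\cln I\to I$ with $I=\{1,\ldots,n\}$, taking as weight the \emph{already-permuted} vector $w' = (w_{\sigma(1)},\ldots,w_{\sigma(n)})$ and as value vector $x$ itself. Since each fibre $f^{-1}(j)$ is the singleton $\{\sigma^{-1}(j)\}$, the push-forward satisfies $(fw')_j = w'_{\sigma^{-1}(j)} = w_j$, so $fw' = w$; and $(xf)_i = x_{\sigma(i)}$, so $xf = (x_{\sigma(1)},\ldots,x_{\sigma(n)})$. Then $M(fw',x) = M(w',xf)$ is precisely the symmetry identity.

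None of this presents a genuine obstacle: each part is an immediate consequence of one use of functoriality, and the only thing that requires care is the variance bookkeeping — which indices the fibres of $f$ group together, which coordinate of $x$ each index is sent to, and, in the symmetry case, the decision to push forward the permuted weight $w'$ (rather than $w$) so that the push-forward untwists back to $w$. It is worth remarking explicitly in the write-up that symmetry, repetition and zero weight are all purely functorial phenomena.
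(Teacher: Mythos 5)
Your proof is correct and follows exactly the paper's approach: each of the three properties is a single instance of functoriality applied to, respectively, the bijection $\sigma$, the surjection $\{1,\ldots,n+1\}\to\{1,\ldots,n\}$ collapsing $n+1$ onto $n$, and the inclusion $\{1,\ldots,n\}\incl\{1,\ldots,n+1\}$. The extra care you take with the variance bookkeeping (in particular, pushing forward the already-permuted weight $w'$ in the symmetry case) is exactly right and merely makes explicit what the paper leaves to the reader.
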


\begin{proof}
Symmetry is proved by applying functoriality to the bijection $\sigma$.
Repetition is proved by applying functoriality to the surjection $\{1, \ldots,
n + 1\} \to \{1, \ldots, n\}$ sending $n + 1$ to $n$ and fixing all other
elements.  The zero weight property is proved by applying functoriality to the
inclusion $\{1, \ldots, n\} \incl \{1, \ldots, n + 1\}$.  \done
\end{proof}

By functoriality applied to bijections, it makes no difference if we restrict
our attention to just one set $\lwr{n} = \{1, \ldots, n\}$ of each
cardinality.  Thus, a system of means may be viewed as a sequence of functions
$(M\cln \Delta_n \times \Rplus^n \to \Rplus)_{n = 1}^\infty$ satisfying
symmetry, repetition, zero weight, consistency, and monotonicity.  To state
the multiplicativity axiom we must choose a bijection $\lwr{m} \times \lwr{n}
\to \lwr{mn}$ for each $m$ and $n$, but by symmetry, the axiom is unaffected
by that choice.

A third option, in the spirit of~\cite{AuNe}, construes a system of means
as a \emph{single} function
\[
M\cln \cooD \times \cooR \to \Rplus, 
\]
where $\cooR$ is the set of finitely-supported sequences in $\Rplus$
and $\cooD = \{ w \in \cooR \such \sum w_i = 1\}$.  It is to satisfy the
evident reformulations of symmetry, repetition, zero weight, consistency, and
monotonicity.  To state the multiplicativity axiom we must choose a
bijection between the set of positive integers and its cartesian square, but
again the choice is immaterial.

The next result says that a weighted mean of numbers increases when
weight is transferred from a smaller number to a larger one.

\begin{lemma}   \label{lemma:transfer}
Every system of means $M$ has the \demph{transfer} property, as follows.  Let
$n \geq 2$, $w \in \Delta_n$, $x \in \Rplus^n$, and $0 \leq \epsln \leq w_n$.
Suppose that $x_n \leq x_{n - 1}$.  Then 
\[
M(w, x) 
\leq 
\MM{(w_1, \ldots, w_{n - 2}, w_{n - 1} + \epsln, w_n - \epsln)}{x}.
\]  
\end{lemma}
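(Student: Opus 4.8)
The plan is to establish the transfer property by a short sequence of manipulations using only the repetition, monotonicity and symmetry properties from Lemma~\ref{lemma:fun}; convexity plays no role. The guiding idea is that, since $x_n$ is no larger than $x_{n-1}$, one may split the $n$th coordinate into two coordinates of weights $w_n - \epsln$ and $\epsln$, both still carrying the value $x_n$; raise the value of the $\epsln$-weighted piece from $x_n$ up to $x_{n-1}$, which by monotonicity can only increase the mean; and then amalgamate that piece with the $(n-1)$st coordinate. The net effect is exactly to move weight $\epsln$ from the smaller value $x_n$ to the larger value $x_{n-1}$.

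To carry this out, set $v = (w_1, \ldots, w_{n-2}, w_{n-1}, w_n - \epsln, \epsln)$, which lies in $\Delta_{n+1}$ because $0 \leq \epsln \leq w_n$, and set $y = (x_1, \ldots, x_{n-2}, x_{n-1}, x_n, x_n)$ and $y' = (x_1, \ldots, x_{n-2}, x_{n-1}, x_n, x_{n-1})$. Then
\begin{align*}
M(w, x)
&= \MM{v}{y} \\
&\leq \MM{v}{y'} \\
&= \MM{(w_1, \ldots, w_{n-2}, w_{n-1} + \epsln, w_n - \epsln)}{x}.
\end{align*}
Here the first equality is the repetition property read from right to left (splitting the $n$th coordinate, the two pieces sharing the common value $x_n$); the inequality is monotonicity, using $x_n \leq x_{n-1}$ in the last coordinate while all other coordinates are unchanged; and the final equality holds because in $\MM{v}{y'}$ the $(n-1)$st and $(n+1)$st coordinates both carry the value $x_{n-1}$, with respective weights $w_{n-1}$ and $\epsln$, so that symmetry (to move these two coordinates into the last two slots), repetition (to merge them), and symmetry again (to restore the original order) turn $\MM{v}{y'}$ into the asserted mean. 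Chaining the three lines gives the claimed inequality.

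The only point that needs care is the index bookkeeping in the last step: the repetition property as stated in Lemma~\ref{lemma:fun} amalgamates the \emph{last} two coordinates, so before invoking it one must use symmetry to bring the two coordinates carrying $x_{n-1}$ into those positions, and afterwards use symmetry once more to undo that permutation. The degenerate cases $\epsln = 0$ and $\epsln = w_n$ require no separate treatment, since repetition and symmetry remain valid even when a weight vanishes. I do not anticipate any genuine obstacle here; the entire content of the lemma is the observation that a single monotonicity step can be inserted between a ``splitting'' and a ``merging'' application of repetition.
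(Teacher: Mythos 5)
Your proof is correct and is essentially the paper's own argument: split the $n$th coordinate by repetition, raise the $\epsln$-weighted copy from $x_n$ to $x_{n-1}$ by monotonicity, and merge by repetition again, with symmetry handling the coordinate positions. The only difference is the cosmetic ordering of the two split weights.
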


\begin{proof}
We have
\begin{align*}
M(w, x) &
=
\MM{(w_1, \ldots, w_{n - 2}, w_{n - 1}, \epsln, w_n  - \epsln)}%
{(x_1, \ldots, x_{n - 2}, x_{n - 1}, x_n, x_n)}      \\
        &
\leq
\MM{(w_1, \ldots, w_{n - 2}, w_{n - 1}, \epsln, w_n  - \epsln)}%
{(x_1, \ldots, x_{n - 2}, x_{n - 1}, x_{n - 1}, x_n)}      \\
        &
=
\MM{(w_1, \ldots, w_{n - 2}, w_{n - 1} + \epsln, w_n - \epsln)}{x}
\end{align*}
by (respectively) repetition, monotonicity, and repetition.  (Symmetry is also
used, but we will generally let this go unmentioned.)  
\done
\end{proof}

\begin{lemma}   \label{lemma:hgs}
Every multiplicative system of means $M$ is \demph{homogeneous}: $M(w, cx) =
cM(w, x)$ whenever $n \geq 1$, $w \in \Delta_n$, $x \in
\Rplus^n$ and $c \in \Rplus$.
\end{lemma}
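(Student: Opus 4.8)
The plan is to get homogeneity for free from multiplicativity by tensoring against a one-element weighting, using consistency to evaluate the new factor. Concretely, I would fix $n \geq 1$, $w \in \Delta_n$, $x \in \Rplus^n$, $c \in \Rplus$, and take $J = \lwr{1}$ together with the unique $v = (1) \in \Delta_J$ and $y = (c) \in \Rplus^J$. Multiplicativity then gives
\[
M(w \otimes v,\, x \otimes y) = M(w, x)\, M(v, y),
\]
and consistency gives $M(v, y) = M((1), (c)) = c$, so the right-hand side is $c\, M(w, x)$.

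It remains to identify the left-hand side with $M(w, cx)$. Here I would use the canonical bijection $\lwr{n} \times \lwr{1} \to \lwr{n}$ (equivalently, the fact noted after Lemma~\ref{lemma:fun} that a system of means may be viewed as a sequence of functions on the sets $\lwr{n}$, with the multiplicativity axiom independent of the chosen bijection). Under this bijection $w \otimes v$ has components $(w \otimes v)_{(i,1)} = w_i \cdot 1 = w_i$, so it corresponds to $w$; and $x \otimes y$ has components $(x \otimes y)_{(i,1)} = x_i \cdot c$, so it corresponds to $cx$. Applying functoriality to this bijection therefore yields $M(w \otimes v, x \otimes y) = M(w, cx)$, and combining with the previous display gives $M(w, cx) = c\, M(w, x)$, as required.

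There is essentially no obstacle here: the only thing to be careful about is the bookkeeping of the identification $\lwr{n} \times \lwr{1} \iso \lwr{n}$ and checking that $w \otimes v$ and $x \otimes y$ really do pull back to $w$ and $cx$ under it — which is immediate from the definition of $\otimes$ and the fact that $v_1 = 1$, $y_1 = c$. (One could equally phrase the argument using the single-function formulation on $\cooD \times \cooR$, tensoring with the sequence supported on a single coordinate; the content is the same.) Monotonicity, convexity, and the transfer property play no role in this lemma.
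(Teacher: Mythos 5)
Your proposal is correct and is essentially the paper's own proof: the paper likewise writes $M(w,cx) = M((1)\otimes w, (c)\otimes x)$ and applies multiplicativity and consistency (only the order of the tensor factors differs, which is immaterial by symmetry). Your extra care over the identification $\lwr{n}\times\lwr{1}\iso\lwr{n}$ is fine but adds nothing beyond what the paper leaves implicit in the phrase ``by definition of $\otimes$.''
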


\begin{proof}
We have
\[
M(w, cx)
=
M((1) \otimes w, (c) \otimes x)
=
M((1), (c)) \cdot M(w, x)
=
cM(w, x)
\]
by definition of $\otimes$, multiplicativity, and consistency.
\done
\end{proof}

\subsection{Recovering the exponent}

For the rest of Section~\ref{sec:proof}, fix a convex multiplicative system of
means $M$.  We will prove that $M = M_p$ for some $p \in [1, \infty]$.

To find $p$, define $\theta: (0, 1) \to \Rplus$ by 
\[
\theta(s) = M((s, 1 - s), (1, 0))
\]
($s \in (0, 1)$).  By multiplicativity and repetition, $\theta(s s') =
\theta(s) \theta(s')$ for all $s, s' \in (0, 1)$, and by transfer, $\theta$ is
(non-strictly) increasing.  If $\theta(s) = 0$ for some $s \in (0, 1)$ then
$\theta(s) = 0$ for all $s \in (0, 1)$.  If not, put $\phi(t) = -\log
\theta(e^{-t})$ ($t > 0$).  Then $\phi$ satisfies the Cauchy functional
equation $\phi(t + t') = \phi(t) + \phi(t')$, and is increasing, from which it
follows easily that $\phi(t) = \alpha t$ for some constant $\alpha \geq 0$ (as
in Section~2.1.1 of~\cite{Acze}).  Hence $\theta(s) = s^\alpha$.  But
\begin{align*}
\theta(\hlf)  &
= 
\max\bigl\{
\MM{(\hlf, \hlf)}{(1, 0)},\:
\MM{(\hlf, \hlf)}{(0, 1)}
\bigr\} \\
        &
\geq
M\bigl((\hlf, \hlf), (\hlf, \hlf)\bigr)
=
M\bigl((1), (\hlf)\bigr)
=
\hlf
\end{align*}
by (respectively) symmetry, convexity, repetition and consistency.  So
$\theta(s) = s^\alpha$ for some $\alpha \in [0, 1]$.  Put $p = 1/\alpha
\in [1, \infty]$: then $\theta(s) = s^{1/p}$.  (In the case $p = \infty$, we
will always understand $1/p$ to mean $0$.)

\subsection{Applying the $p$-norm characterization theorem}

First we recall Theorem~1.1 of Aubrun and Nechita~\cite{AuNe}, rephrasing it
slightly.

Given an injection $f\cln I \to J$ of finite sets, there is an induced map
$\reals^I \to \reals^J$, denoted by $x \mapsto fx$.  It is defined, for $j \in
J$, by $(fx)_j = x_i$ if $j = f(i)$ for some $i \in I$, and $(fx)_j = 0$
otherwise.  Given $x \in \reals^I$ and $y \in \reals^J$, write $x \otimes y =
(x_i y_j)_{(i, j) \in I \times J} \in \reals^{I \times J}$.

A \demph{system of norms} consists of a norm $\nm{\cdot}$ on $\reals^I$ for
each finite set $I$, such that $\nm{fx} = \nm{x}$ whenever $f: I \to J$ is an
injection of finite sets and $x \in \reals^I$.  It is
\demph{multiplicative} if $\nm{x \otimes y} = \nm{x} \nm{y}$ whenever $x \in
\reals^I$ and $y \in \reals^J$.

For example, for each $q \in [1, \infty]$ there is a multiplicative system of
norms $\nrm{\cdot}{q}$ defined by the formula in the Introduction.
Theorem~1.1 of~\cite{AuNe} states that these are the only ones:

\begin{thm}     \label{thm:p-norm}
Every multiplicative system of norms is equal to $\nrm{\cdot}{q}$ for some $q
\in [1, \infty]$.
\end{thm}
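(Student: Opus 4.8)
\emph{The plan.} Reduce the abstract axioms to a concrete shape (unconditionality plus monotonicity), read off the candidate exponent $q$ from $\nm{(1,1)}$, and then identify the whole norm by probing it with large tensor powers, using Cram\'er's large deviation theorem at the one nontrivial juncture.

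\emph{Elementary reductions.} Write $\mathbb 1_n \in \reals^n$ for the all-ones vector and $a_n = \nm{\mathbb 1_n}$. Injections relate all standard basis vectors, so they share a common norm $\nm{e}$, and $e \otimes e$ is again a basis vector, whence $\nm{e}^2 = \nm{e}$ and $\nm{e} = 1$. Permutations are injections, so each $\nm{\cdot}$ is symmetric. The one slightly delicate point is \emph{unconditionality}: for $x \in \reals^n$ and any signs $\epsln \in \{\pm 1\}^n$, the vector $(\epsln_1 x_1, \dots, \epsln_n x_n) \otimes (1,-1) \in \reals^{2n}$ has the same multiset of entries $\{x_1, -x_1, \dots, x_n, -x_n\}$ regardless of $\epsln$; by symmetry its norm does not depend on $\epsln$, and by multiplicativity it equals $\nm{(1,-1)} > 0$ times $\nm{(\epsln_1 x_1, \dots, \epsln_n x_n)}$, so the latter is independent of $\epsln$. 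Since $P = \hlf(I + R)$, with $R$ the reflection flipping a single coordinate and $P$ the projection killing it, unconditionality gives \emph{monotonicity} on the nonnegative orthant (if $0 \le x \le y$ componentwise then $\nm{x} \le \nm{y}$), hence also that zeroing out coordinates cannot increase the norm. Now $a_{mn} = a_m a_n$ by multiplicativity, $a_n \le a_{n+1}$ by monotonicity, and $1 = a_1 \le a_2 \le 2$ by the triangle inequality; a standard squeeze argument forces $a_n = n^{1/q}$ for a unique $q \in [1,\infty]$, where $1/q := \log_2 a_2$ (and $1/\infty := 0$).

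\emph{Identifying the norm.} It remains to prove $\nm{x} = \nrm{x}{q}$; by unconditionality and homogeneity we may assume $x \in (0,\infty)^n$. The engine is $\nm{x^{\otimes N}} = \nm{x}^N$, so $\log\nm{x} = \lim_N \frac1N \log\nm{x^{\otimes N}}$. The $n^N$ entries of $x^{\otimes N}$ are the products $\prod_{k=1}^N x_{i_k}$; writing an entry as $\exp(N\xi_I)$ with $\xi_I$ the mean of $N$ samples from the multiset $\{\log x_i\}$, Cram\'er's theorem gives that the number of $I$ with $\xi_I$ near a value $v$ is $\exp(N(\log n - \Lambda^*(v) + o(1)))$, where $\Lambda^*$ is the Legendre transform of $\Lambda(\lambda) = \log\bigl(\frac1n\sum_i x_i^\lambda\bigr)$. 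Partition $x^{\otimes N}$ into finitely many pieces according to which short interval $\xi_I$ lies in. Each piece is sandwiched componentwise between its least and its greatest entry times the relevant indicator vector, whose norm we know to be $(\text{number of entries})^{1/q}$ times a scalar. The triangle inequality over the pieces---using only the easy Markov-inequality half of Cram\'er---yields $\log\nm{x} \le \sup_v[v + \frac1q(\log n - \Lambda^*(v))]$, and applying monotonicity to a single well-chosen piece---here the genuine large-deviation lower bound is needed---yields the reverse inequality. Since $\Lambda$ is convex, $\sup_v[v + \frac1q(\log n - \Lambda^*(v))] = \frac1q(\log n + \Lambda(q)) = \log\nrm{x}{q}$ (and $= \log\nrm{x}{\infty}$ when $q = \infty$), which finishes the proof.

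\emph{Main obstacle.} The crux is the two-sided estimate of $\nm{x^{\otimes N}}^{1/N}$. The upper bound is soft---Markov's inequality plus the triangle inequality over boundedly many pieces---but the matching lower bound genuinely requires the large-deviation lower bound for empirical means, i.e.\ Cram\'er's theorem, which is the only substantial external input. The unconditionality trick, the squeeze pinning down $a_n$, and the concluding Legendre-duality identity are all routine once the framework is in place.
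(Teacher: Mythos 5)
This theorem is not proved in the paper at all: it is imported verbatim from Aubrun and Nechita \cite{AuNe}, whose argument---as the paper itself notes---rests on Cram\'er's large deviation theorem, and your sketch (reduction to an unconditional, monotone, symmetric norm; pinning down $\nm{\mathbf{1}_n}=n^{1/q}$ by the multiplicative--monotone squeeze; identifying the norm on positive vectors by slicing $x^{\otimes N}$ into level sets of the empirical mean and invoking the two halves of Cram\'er plus Legendre duality) is a faithful and essentially correct reconstruction of that same proof. So you have taken the same route as the cited source rather than a new one; the only caveat is that a full writeup would need to spell out the routine details you compress (e.g.\ full monotonicity from unconditionality via diagonal contractions, and the endpoint/attainment issues in the variational formula when $q=\infty$).
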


We now resume our proof.  For a finite set $I$ with $n \geq 1$ elements,
denote the uniform distribution on $I$ by $u_I = (1/n)_{i \in I} \in
\Delta_I$, and define a function $\nm{\cdot}\cln \reals^I \to \Rplus$ by
\[
\nm{x} 
= 
n^{1/p} 
M\bigl( u_I, (|x_i|)_{i \in I} \bigr)
\]
($x \in \reals^I$).  To cover the case $I = \emptyset$, let $\nm{\cdot}\cln
\reals^\emptyset \to \Rplus$ be the function whose single value is $0$.

When $I = \{1, \ldots, n\}$, we write $u_I$ as $u_n$.  We will use the
observation that
\begin{equation}        \label{eq:prefactor}
n^{-1/p} 
=
M((1/n, 1 - 1/n), (1, 0))
=
M(u_n, (1, 0, \ldots, 0)),
\end{equation}
by the defining property of $p$ and repetition. 

\begin{lemma}
$\nm{\cdot}$ is a norm on $\reals^I$, for each finite set $I$.
\end{lemma}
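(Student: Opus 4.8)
The plan is to check the three defining properties of a norm for $\nm{\cdot}$: positive-definiteness, absolute homogeneity, and the triangle inequality. Fix a finite set $I$ with $n = |I|$ elements; the case $I = \emptyset$ is trivial, since $\reals^\emptyset$ is the zero vector space. It is convenient to write $\nm{x} = n^{1/p} F\bigl((|x_i|)_{i \in I}\bigr)$, where $F\cln \Rplus^I \to \Rplus$ is defined by $F(a) = M(u_I, a)$; then monotonicity of $M$ makes $F$ monotone, and Lemma~\ref{lemma:hgs} makes $F$ homogeneous, $F(ca) = cF(a)$ for $c \in \Rplus$.

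Homogeneity of $\nm{\cdot}$ is then immediate: for $c \in \reals$ one has $|cx_i| = |c|\,|x_i|$ with $|c| \in \Rplus$, so $\nm{cx} = |c| \nm{x}$; in particular $\nm{0} = 0$. For positive-definiteness it remains to show $\nm{x} > 0$ when $x \neq 0$. Picking $j$ with $x_j \neq 0$, I would use monotonicity to get $F\bigl((|x_i|)_i\bigr) \geq F(|x_j| e_j)$ (with $e_j \in \Rplus^I$ the indicator of $j$), then symmetry and Lemma~\ref{lemma:hgs} to identify $F(|x_j| e_j) = |x_j| \cdot M(u_n, (1, 0, \ldots, 0))$, which by~\eqref{eq:prefactor} equals $|x_j| n^{-1/p} > 0$; hence $\nm{x} \geq |x_j| > 0$.

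The real content is the triangle inequality. Since $|x_i + y_i| \leq |x_i| + |y_i|$, monotonicity reduces it to the \emph{subadditivity} of $F$, namely $F(a + b) \leq F(a) + F(b)$ for all $a, b \in \Rplus^I$; I intend to derive this from the convexity axiom (used with $w = u_I$), which a priori only gives the midpoint condition $F\bigl(\tfrac{a+b}{2}\bigr) \leq \max\{F(a), F(b)\}$. First I would iterate this — via the recursion $\lambda a + (1 - \lambda) b = \tfrac12 b + \tfrac12\bigl(2\lambda a + (1 - 2\lambda) b\bigr)$ for $\lambda \leq \tfrac12$, and symmetrically — to obtain $F\bigl(\lambda a + (1 - \lambda) b\bigr) \leq \max\{F(a), F(b)\}$ for every dyadic rational $\lambda \in [0, 1]$. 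Next I would observe that $F$ is lower semicontinuous: if $a^{(k)} \to a$ in $\Rplus^I$ then for each $t \in (0, 1)$ we have $a^{(k)} \geq ta$ coordinatewise once $k$ is large, whence $F(a^{(k)}) \geq F(ta) = tF(a)$ by monotonicity and homogeneity, so $\liminf_k F(a^{(k)}) \geq F(a)$. Combining these — letting dyadic $\lambda_k \to \lambda$ — yields the full quasiconvexity $F\bigl(\lambda a + (1 - \lambda) b\bigr) \leq \max\{F(a), F(b)\}$ for all $\lambda \in [0, 1]$. Subadditivity then follows: if $F(a), F(b) > 0$ then $\tfrac{a + b}{F(a) + F(b)}$ is a convex combination of $a/F(a)$ and $b/F(b)$, each of $F$-value $1$, so quasiconvexity gives $F\bigl(\tfrac{a+b}{F(a)+F(b)}\bigr) \leq 1$ and homogeneity gives $F(a + b) \leq F(a) + F(b)$; and $F(a) = 0$ forces $a = 0$ (by the definiteness argument above), so the degenerate cases are trivial.

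The hard part is this last block, and specifically the upgrade from midpoint-quasiconvexity to genuine quasiconvexity. The convexity axiom is far weaker than ordinary convexity, and the only regularity at our disposal is whatever monotonicity and homogeneity force; the point that makes everything work is that those two properties already imply lower semicontinuity, so that controlling $F$ on dyadic convex combinations is enough. Granting that, the rest is routine.
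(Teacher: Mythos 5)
Your proof is correct, and its overall architecture matches the paper's: homogeneity and positive-definiteness are handled exactly as in the paper (monotonicity plus the identity $M(u_n,(1,0,\ldots,0)) = n^{-1/p}$), and the triangle inequality is reduced to iterating the midpoint convexity axiom over dyadic $\lambda$ and then passing to general $\lambda$. The one genuine difference is in that last passage and in the packaging. The paper works with convexity of the unit ball $B$ and closes the gap between dyadic and arbitrary $\lambda$ by a multiplicative perturbation: choose dyadic $\lambda'$ with $\lambda \leq (1+\epsln)\lambda'$ and $1-\lambda \leq (1+\epsln)(1-\lambda')$, apply monotonicity and homogeneity to get $\nm{\lambda x + (1-\lambda)y} \leq 1+\epsln$, and let $\epsln \to 0$. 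You instead isolate a lower semicontinuity lemma for $F(a) = M(u_I,a)$ on $\Rplus^I$ (derived from monotonicity and homogeneity via $a^{(k)} \geq ta$ for large $k$), use it to upgrade dyadic quasiconvexity to full quasiconvexity, and then deduce subadditivity by the standard normalization $\frac{a+b}{F(a)+F(b)}$ --- which does require quasiconvexity at the irrational weight $F(a)/(F(a)+F(b))$, so the upgrade is genuinely needed and you supply it. Both limiting arguments rest on the same underlying fact (monotonicity plus homogeneity force one-sided continuity); yours extracts it as a reusable regularity statement, while the paper's is more direct and avoids stating any continuity property explicitly. Your degenerate case ($F(a)=0 \Rightarrow a=0$) is correctly discharged by your definiteness argument. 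No gaps.
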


\begin{proof}
It is enough to prove this when $I = \{1, \ldots, n\}$.  For $x \in \reals^n$
and $c \in \reals$, we have $\nm{cx} = |c| \nm{x}$ by homogeneity.  If $x \in
\reals^n$ with $x \neq 0$ then by symmetry we may assume that $x_1 \neq 0$,
and then
\[
\nm{x} 
\geq
n^{1/p} M(u_n, (|x_1|, 0, \ldots, 0))
=
|x_1| 
> 
0
\]
by monotonicity, homogeneity, and equation~\eqref{eq:prefactor}.

It remains to prove the triangle inequality, or equivalently that the unit
ball $B = \{ x \in \reals^n \such \nm{x} \leq 1 \}$ is convex.  Let $x, y \in
B$ and $\lambda \in [0, 1]$.  We must prove that $\lambda x + (1 - \lambda) y
\in B$.  Write $\pos{x} = (|x_1|, \ldots, |x_n|)$, and similarly $\pos{y}$;
then $\pos{x}, \pos{y} \in B$, and
\[
\nm{\lambda x + (1 - \lambda) y}
\leq
\nm{\lambda \pos{x} + (1 - \lambda) \pos{y}}
\]
by monotonicity, so we may assume that $x_i, y_i \geq 0$ for all $i$.

When $\lambda$ is a dyadic rational, $\lambda x + (1 - \lambda) y \in B$ by
convexity of $M$.  For the general case, let $\epsln > 0$.  Choose a dyadic
% rational 
$\lambda' \in [0, 1]$ such that $\lambda \leq (1 + \epsln) \lambda'$
and $1 - \lambda \leq (1 + \epsln) (1 - \lambda')$.  Then
\[
\nm{\lambda x + (1 - \lambda) y}        
\leq
\nm{(1 + \epsln) \lambda' x + (1 + \epsln) (1 - \lambda') y}
= 
(1 + \epsln) \nm{\lambda' x + (1 - \lambda') y} 
\leq 
1 + \epsln
\]
by monotonicity and homogeneity.  But $\epsln$ was arbitrary, so $\nm{\lambda
x + (1 - \lambda) y} \leq 1$.
\done
\end{proof}

Next I claim that $\nm{\cdot}$ is a system of norms.  By functoriality, it
suffices to show that
\[
\nm{(x_1, \ldots, x_n)} 
=
\nm{(x_1, \ldots, x_n, 0)}
\]
for all $x \in \reals^n$ ($n \geq 1$).  By definition of $\nm{\cdot}$ and
equation~\eqref{eq:prefactor}, this is equivalent to
\begin{align*}
        &
M(u_{n + 1}, (1, 0, \ldots, 0)) 
\cdot
M(u_n, (|x_1|, \ldots, |x_n|))  \\
=       &
M(u_n, (1, 0, \ldots, 0))
\cdot
M(u_{n + 1}, (|x_1|, \ldots, |x_n|, 0)).
\end{align*}
But by multiplicativity, both sides are equal to
\[
M(u_{n(n + 1)}, (|x_1|, \ldots, |x_n|, 0, \ldots, 0)),
\]
proving the claim.

The system of norms $\nm{\cdot}$ is, moreover, multiplicative, by
multiplicativity of $M$.

Theorem~\ref{thm:p-norm} now implies that $\nm{\cdot} = \nrm{\cdot}{q}$ for
some $q \in [1, \infty]$.  But
\[
\nm{(1, 1)} 
=
2^{1/p} M(u_2, (1, 1))
=
2^{1/p} M((1), (1))
=
2^{1/p}
\]
by definition of $\nm{\cdot}$, repetition and consistency, whereas $\nrm{(1,
1)}{q} = 2^{1/q}$, so $p = q$.  Hence for all $n \geq 1$ and $x \in \Rplus^n$,
\[
M(u_n, x) = n^{-1/p} \nrm{x}{p} = M_p(u_n, x).
\]

\subsection{The case of uneven weights}
\label{subsec:uneven}

We now know that $M(u_n, x) = M_p(u_n, x)$ for all $n \geq 1$ and $x \in
\Rplus^n$.  Here we deduce the same equation for an arbitrary weighting $w$ in
place of $u_n$.  This will complete the proof.

When the coordinates of $w$ are rational, the equation can be proved by a
standard technique, described in Section~2.2 of~\cite{HLP}.  Let $n \geq 1$,
$w \in \Delta_n \cap \rationals^n$ and $x \in \Rplus^n$.  Write $w_i = k_i/k$
where $k_i$ and $k$ are nonnegative integers; thus, $\sum_i k_i = k > 0$.  We
have
\[
M(w, x)
=
M
\Bigl(
\Bigl(
\frac{k_1}{k}, \ldots, \frac{k_n}{k}
\Bigr), x
\Bigr)
=
M\bigl(u_k, 
(
\underbrace{x_1, \ldots, x_1}_{k_1}, 
\ldots, 
\underbrace{x_n, \ldots, x_n}_{k_n})
\bigr)
\]
by repetition and the zero weight property.  (The latter is needed to cover
the possibility that $w_i = 0$ for some values of $i$.)  But $M(u_k, \dashbk)
= M_p(u_k, \dashbk)$, so 
\[
M(w, x) 
=
\Bigl( \sum_{i = 1}^n k_i \frac{1}{k} x_i^p \Bigr)^{1/p}
=
M_p(w, x)
\]
when $p < \infty$, and similarly 
\[
M(w, x) 
= 
\max_{i\cln k_i > 0} x_i 
= 
\max_{i\cln w_i > 0} x_i 
= 
M_\infty(w, x)
\]
when $p = \infty$.

To extend the result to irrational weights, we use an approximation lemma.

\begin{lemma}   \label{lemma:approx}
Let %$n \geq 1$, 
$w \in \Delta_n$, $x \in \Rplus^n$, and $\delta > 0$.  Then
there exist $w', w'' \in \Delta_n \cap \rationals^n$ such that 
\[
M(w', x) \geq M(w, x) \geq M(w'', x)
\]
with $\nrm{w - w'}{\infty} < \delta$ and $\nrm{w - w''}{\infty} < \delta$.
\end{lemma}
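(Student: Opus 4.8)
The idea is to use the transfer property (Lemma~\ref{lemma:transfer}): moving weight onto a larger argument cannot decrease the mean, and moving it onto a smaller argument cannot increase it. So the plan is to produce $w'$ by perturbing $w$ slightly towards the largest coordinate of $x$, and $w''$ by perturbing it slightly towards the smallest; rationality of the perturbed weights is then arranged using the density of $\rationals$ in $\reals$. By symmetry we may assume $x_1 \geq x_2 \geq \cdots \geq x_n$, and we may take $n \geq 2$, since for $n = 1$ we have $w = (1) \in \rationals^n$ and may set $w' = w'' = w$.

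To build $w''$: using density of $\rationals$, choose rationals $w''_1, \ldots, w''_{n-1}$ with $0 \leq w''_i \leq w_i$ and $w_i - w''_i < \delta/n$, and put $w''_n = 1 - \sum_{i=1}^{n-1} w''_i$. Since $w''_i \leq w_i$ we get $w''_n \geq 1 - \sum_{i=1}^{n-1} w_i = w_n \geq 0$, so $w'' \in \Delta_n \cap \rationals^n$, and $\nrm{w - w''}{\infty} < \delta$ because $|w_i - w''_i| < \delta/n$ for $i < n$ and $w''_n - w_n = \sum_{i=1}^{n-1}(w_i - w''_i) < \delta$. Now pass from $w$ to $w''$ in $n - 1$ steps, the $k$-th step transferring the weight $\epsln_k := w_k - w''_k$ from coordinate $k$ to coordinate $n$; every intermediate vector lies in $\Delta_n$ (coordinate $n$ only grows and coordinate $k$ only drops, from $w_k$ to $w''_k \geq 0$), and since $x_n \leq x_k$ each step moves weight onto a smaller argument, so by the transfer property $M$ does not increase. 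Hence $M(w'', x) \leq M(w, x)$.

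The construction of $w'$ is symmetric: choose rationals $w'_2, \ldots, w'_n$ with $0 \leq w'_i \leq w_i$ and $w_i - w'_i < \delta/n$, put $w'_1 = 1 - \sum_{i=2}^n w'_i \geq w_1 \geq 0$, so that $w' \in \Delta_n \cap \rationals^n$ with $\nrm{w - w'}{\infty} < \delta$ as before. Passing from $w$ to $w'$ by transferring, in turn, the weight $w_i - w'_i$ from coordinate $i$ to coordinate $1$ (for $i = 2, \ldots, n$), each step moves weight onto the largest argument $x_1$, so by the transfer property $M$ does not decrease; hence $M(w', x) \geq M(w, x)$.

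The only slightly delicate point is keeping the perturbed weights in the simplex, which is why we approximate \emph{from below} every coordinate except one designated ``sink'' coordinate (the $n$-th for $w''$, the first for $w'$); that coordinate then absorbs the entire, controllably small, surplus. With that bookkeeping in hand, the rest is just two applications of the transfer property, one in each direction.
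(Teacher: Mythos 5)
Your proof is correct and uses essentially the same idea as the paper: approximate the weights by rationals one coordinate at a time and control the sign of the change in $M$ via the transfer property. The only cosmetic difference is that you route all the transferred weight to a single ``sink'' coordinate (hence the $\delta/n$ bookkeeping), whereas the paper cascades it through adjacent coordinates $1 \to 2 \to \cdots \to n$ so that each coordinate's net change is automatically below $\delta$; both work.
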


\begin{proof}
We just prove the existence of such a $w'$, the argument for $w''$ being 
similar.  
Suppose without loss of generality that $x_1 \leq \cdots \leq x_n$.  Choose
$\delta_1 \in [0, \delta)$ with $w_1 - \delta_1 \in \rationals
\cap \Rplus$.  Then
\[
\MM{(w_1 - \delta_1, w_2 + \delta_1, w_3, \ldots, w_n)}{x} 
\geq
M(w, x)
\]
by transfer.  Next choose $\delta_2 \in [0, \delta)$ such that
$w_2 + \delta_1 - \delta_2 \in \rationals \cap \Rplus$.  Then
\[
\MM{(w_1 - \delta_1, w_2 + \delta_1 - \delta_2, w_3 + \delta_2, w_4, 
\ldots, w_n)}{x}        
\geq 
\MM{(w_1 - \delta_1, w_2 + \delta_1, w_3, \ldots, w_n)}{x}. 
\] 
Continuing in this way, we arrive at 
\[
\MM{(w_1 - \delta_1, w_2 + \delta_1 - \delta_2, w_3 + \delta_2 - \delta_3, 
\ldots, w_{n - 1} + \delta_{n - 2} - \delta_{n - 1}, w_n + \delta_{n - 1})}{x}
\geq 
M(w, x).
\]
Taking $w'$ to be the weighting on the left-hand side gives the result.
\done
\end{proof}

Now we prove by induction on $n \geq 1$ that $M(w, x) = M_p(w, x)$ whenever $w
\in \Delta_n$ and $x \in \Rplus^n$.  It is trivial for $n = 1$;
suppose that $n \geq 2$.  Let $w \in \Delta_n$ and $x \in \Rplus^n$.  If $w_i
= 0$ for some $i$ then $M(w, x) = M_p(w, x)$ by the zero weight property and
inductive hypothesis.  Suppose, then, that $w_i > 0$ for all $i$.

Let $\epsln > 0$.  Since $M_p(\dashbk, x)$ is continuous on $\{ w' \in
\Delta_n \such w'_i > 0 \text{ for all } i \}$, we may choose $\delta >
0$ such that $|M_p(w, x) - M_p(w', x)| < \epsln$ for all $w'$ such that
$\nrm{w - w'}{\infty} < \delta$.  Choose $w'$ as in Lemma~\ref{lemma:approx}:
then
\[
M(w, x) 
\leq
M(w', x)
=
M_p(w', x)
<
M_p(w, x) + \epsln.
\]
This holds for all $\epsln > 0$, so $M(w, x) \leq M_p(w, x)$.
Similarly, applying the other half of Lemma~\ref{lemma:approx},
$M(w, x) \geq M_p(w, x)$.  Hence $M(w, x) = M_p(w, x)$,
completing the proof.

\section{A variant}

The zero weight property was used only in Section~\ref{subsec:uneven}, and
only in order to handle means $M(w, x)$ in which $w_i = 0$ for some $i$.  This
suggests a variant of Theorem~\ref{thm:main} in which all weights are
required to be positive and the zero weight property is dropped.  This amounts
to using $\Delta_I^\circ = \{ w \in \Delta_I \such w_i > 0 \text{ for all } i
\in I \}$ in place of $\Delta_I$, and using only \emph{surjections} between
finite sets.

Thus, a \demph{system of positively weighted means} is defined just as a
system of means was defined, but replacing $\Delta_I$ by $\Delta_I^\circ$ and
only demanding functoriality for surjections.  For each $p \in [1, \infty]$
there is a convex multiplicative system of positively weighted means, $M_p$,
defined by restricting the system of means of the same name.  By removing
all mention of zero weights from the proof above, we obtain:

\begin{thm}
Every convex multiplicative system of positively weighted means is equal to
$M_p$ for some $p \in [1, \infty]$.  
\done
\end{thm}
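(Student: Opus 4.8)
The plan is to re-run the proof of Theorem~\ref{thm:main} essentially verbatim, checking at each step that it survives the replacement of $\Delta_I$ by $\Delta_I^\circ$ and of arbitrary maps of finite sets by surjections, and observing that the zero weight property --- together with the induction that used it --- is never actually needed once every weighting in sight has strictly positive coordinates. First I would re-establish the elementary properties of systems of means. The symmetry and repetition properties (Lemma~\ref{lemma:fun}) come from functoriality for bijections and for the surjection $\{1,\ldots,n+1\}\to\{1,\ldots,n\}$, so they carry over unchanged; the zero weight property is simply deleted. The transfer property (Lemma~\ref{lemma:transfer}) carries over provided one restricts to $0<\epsln<w_n$ (the case $\epsln=0$ being a trivial equality): then the intermediate weighting appearing in its proof, the one with a coordinate equal to $\epsln$, and the final weighting all lie in $\Delta^\circ$. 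Homogeneity (Lemma~\ref{lemma:hgs}) is untouched, since $(1)\otimes w\in\Delta^\circ$ whenever $w\in\Delta^\circ$.

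Next I would recover the exponent exactly as in the proof of Theorem~\ref{thm:main}. The map $\theta(s)=M((s,1-s),(1,0))$ is well defined for $s\in(0,1)$ because $(s,1-s)\in\Delta_2^\circ$; it is multiplicative by multiplicativity and repetition, and increasing by transfer; the Cauchy-equation argument gives $\theta(s)=s^\alpha$, the alternative $\theta\equiv 0$ being ruled out --- just as in the original --- by the convexity bound $\theta(\hlf)\geq M((\hlf,\hlf),(\hlf,\hlf))=\hlf$, so $\alpha\in[0,1]$ and $p:=1/\alpha\in[1,\infty]$. I would then form the norm $\nm{x}=n^{1/p}M(u_I,(|x_i|)_{i\in I})$ for a finite set $I$ with $n\geq 1$ elements (and $\nm{\cdot}=0$ on $\reals^\emptyset$). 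Since this only ever feeds the \emph{uniform} weighting $u_I\in\Delta_I^\circ$ to $M$, the original verifications go through word for word: positive definiteness, convexity of the unit ball via dyadic approximation, invariance under the inclusions $\{1,\ldots,n\}\incl\{1,\ldots,n+1\}$ (which append a zero coordinate to the \emph{vector}, not the weighting), multiplicativity, and identity~\eqref{eq:prefactor} for $n\geq 2$ (with $n=1$ trivial). Theorem~\ref{thm:p-norm} then gives $\nm{\cdot}=\nrm{\cdot}{q}$; evaluating at $(1,1)$ forces $q=p$, whence $M(u_n,x)=M_p(u_n,x)$ for all $n\geq 1$ and $x\in\Rplus^n$.

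Finally I would pass to uneven positive weights, and here the argument is genuinely shorter than the original, because the induction on $n$ disappears. For $w\in\Delta_n^\circ\cap\rationals^n$, write $w_i=k_i/k$ with each $k_i$ a \emph{positive} integer and $\sum_i k_i=k$; repetition alone --- no zero weight property, precisely because each $k_i\geq 1$ --- rewrites $M(w,x)$ as $M(u_k,\dashbk)$ applied to the list in which each $x_i$ is repeated $k_i$ times, every intermediate weighting being positive, and the already-proven uniform case finishes it. For irrational $w$ I would use the variant of Lemma~\ref{lemma:approx}: for $0<\delta<\min_i w_i$ the same transfer-based construction produces $w',w''\in\Delta_n^\circ\cap\rationals^n$ with $M(w',x)\geq M(w,x)\geq M(w'',x)$ and $\nrm{w-w'}{\infty},\nrm{w-w''}{\infty}<\delta$ --- the bound $\delta<\min_i w_i$ being exactly what keeps all coordinates of $w'$ and $w''$ positive, while each $\delta_i$ may still be taken in $[0,\delta)$ so as to rationalise the running partial sums. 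Combined with the continuity of $M_p(\dashbk,x)$ on $\Delta_n^\circ$ --- and we never approach its boundary --- this gives $M(w,x)\leq M_p(w,x)+\epsln$ and $M(w,x)\geq M_p(w,x)-\epsln$ for every $\epsln>0$, hence $M=M_p$ and the theorem follows.

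The only point demanding genuine care --- the ``main obstacle'', such as it is --- is this bookkeeping of positivity: one must verify that the restricted transfer lemma, the variant approximation lemma, and every intermediate weighting produced along the way stay inside $\Delta^\circ$, which is what dictates the hypothesis $\delta<\min_i w_i$ and the use of strictly positive integers $k_i$ in the rational case. Once that is in place there is no new mathematical content; the whole content of the variant is that the zero weight property, and the induction it supported, were doing nothing but handling degenerate weightings that $\Delta_I^\circ$ excludes by fiat.
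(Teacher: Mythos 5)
Your proposal is correct and is exactly the paper's approach: the paper proves this variant by the one-line remark that one simply removes all mention of zero weights from the proof of Theorem~\ref{thm:main}, and your positivity bookkeeping (restricting transfer to $0<\epsln<w_n$, taking $\delta<\min_i w_i$ in the approximation lemma, using positive integers $k_i$ in the rational case, and dropping the induction that the zero weight property supported) is precisely the verification the paper leaves implicit.
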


\end{document}